\tikzset{
  vertex/.style={circle,draw, inner sep = 2pt},
  perm/.style={},
  edge/.style={, color=black},
  curveedge/.style={color=black,out=90,in=90},
}
\newtheorem{theorem}{Theorem}[section]
\newtheorem{observation}[theorem]{Observation}
\newtheorem{lemma}[theorem]{Lemma}
\newtheorem{definition}[theorem]{Definition}
\DeclarePairedDelimiterX{\abs}[1]{\lvert}{\rvert}{#1}
\DeclareMathOperator{\id}{id}
\newcommand{\tlgraph}{terrain-like graph\xspace}
\newcommand{\tl}{terrain-like\xspace}
\newcommand{\tlset}{\mathcal{T}_n}
\newcommand{\Dperm}{Dumont permutation of the second kind\xspace}
\newcommand{\Dset}{\mathcal{D}^2_{2n}}
\newcommand{\Dder}{\tilde{\mathcal{D}}^2_{2n}}
\newcommand{\Gset}{\mathcal{G}_n}
\newcommand{\Pset}{\mathcal{S}_{2n}}
\newcommand{\oneto}[1]{[#1]}
\title{Terrain-like Graphs and the Median Genocchi Numbers}
\author[1]{Vincent Froese}
\author[1]{Malte Renken\footnote{Supported by the DFG project NI~369/17-1.}}
\affil[1]{\small
  Algorithmics and Computational Complexity, Faculty~IV, TU Berlin, Berlin, Germany,\protect\\
  \{vincent.froese, m.renken\}@tu-berlin.de}
\begin{document}
\maketitle

\begin{abstract}
  A graph with vertex set~$\{1,\ldots,n\}$ is \emph{terrain-like} if, for any edge pair
  $\{a,c\},\{b,d\}$ with~$a<b<c<d$, the edge~$\{a,d\}$ also exists.
  Terrain-like graphs frequently appear in geometry in the context of visibility graphs.
  We show that terrain-like graphs are counted by the median Genocchi numbers.
  To this end, we prove a bijection between terrain-like graphs and Dumont derangements of the second kind.
\end{abstract}

\section{Introduction}

The Genocchi numbers appear in various areas with several different combinatorial interpretations~\cite{Dumont74,HZ99,Bigeni14,Hetyei19}
The \emph{Genocchi numbers of the first kind} \cite{Genocchi} are
\[G_1=1,\,G_2=1,\,G_3=3,\,G_4=17,\,G_5=155,\ldots\]
and the \emph{Genocchi numbers of the second kind} (also called \emph{median Genocchi numbers})~\cite{medGenocchi} are
\[H_1=1,\, H_2=2,\, H_3=8,\, H_4=56,\, H_5=608,\ldots.\]

Both kinds of Genocchi numbers are closely tied to the following set of permutations.
(Here $\Pset$ denotes the symmetric group on~$[2n] = \{1, 2, \dots, 2n\}$).

\begin{definition}
  A permutation~$\pi\in \Pset$ is a \emph{\Dperm} if~$\pi(2i-1) \ge 2i-1$ and~$\pi(2i) < 2i$ for all~$i\in[n]$.

  The set of all Dumont permutations of the second kind is denoted by~$\Dset$.
\end{definition}

It is known that~$|\Dset|=G_{n+1}$~\cite{Dumont74}.
For the subset~$\Dder\subset \Dset$ of all \emph{derangements}, that is, permutations~$\pi\in\Dset$ with~$\pi(2i-1) > 2i-1$ for all~$i\in[n-1]$, it holds~$|\Dder|=H_n$~\cite{DR94}.
\Cref{fig:examples} shows all permutations in~$\Dder$ for~$n=3$.

Let now~$\Gset$ denote the set of all undirected simple graphs with vertex set~$[n]$.
\begin{definition}
  A graph~$G=([n], E)\in\Gset$ is \emph{terrain-like} if it satisfies the \emph{X-property}, that is, if
  \[ \{\{a, c\}, \{b, d\}\} \subseteq E \implies \{a, d\} \in E \]
  holds for all $a<b<c<d$.
  
  The set of all \tl graphs in $\Gset$ is denoted~$\tlset$.
\end{definition}

Terrain-like graphs~\cite{AFKS22,FR21} often appear in geometry in the context of visibility graphs and contain the class of \emph{persistent graphs} which one-to-one correspond to triangulations of the three-dimensional cyclic polytope~\cite{FR21c} and are a superset of the so-called \emph{terrain-visibility graphs}~\cite{AGKSW20,FR19,Saeedi2015}.

We show that~$|\tlset|=H_n$ by proving the following.

\begin{theorem}
  For every~$n\ge 1$, there is a bijection between~$\tlset$ and~$\Dder$.
\end{theorem}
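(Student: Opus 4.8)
The plan is to construct the bijection explicitly, reading a permutation off a \tl graph vertex by vertex and recovering the graph from a permutation by the reverse recipe. Two features of $\Dder$ guide the construction. First, the defining inequalities are rigid at the ends: $\pi(2)<2$ forces $\pi(2)=1$, and being a derangement forces $\pi(2n-1)=2n$, so the content of $\pi$ lives in its interior. Second, every odd position is a strict excedance and every even position a strict deficiency; folding positions and values by $k\mapsto\lceil k/2\rceil$ onto $\oneto{n}$, the odd (resp.\ even) position of pair~$i$ should record how far the neighbourhood of vertex~$i$ reaches to the right (resp.\ left).

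That such a reach-based encoding can succeed is suggested by the monotonicity the X-property imposes. Writing $\rho(v)$ for the largest neighbour of~$v$ (and $\rho(v)=v$ if $v$ has none to its right), one checks from the X-property applied to $a<b<\rho(a)<w$ that whenever $a<b$ and $\rho(a)>b$ one has $\rho(b)\le\rho(a)$; symmetrically for the leftmost neighbour $\lambda(v)$. Thus the rightward reaches form a staircase of exactly the kind the excedance values $\pi(2i-1)$ are constrained to form, and likewise for the leftward reaches and the deficiencies $\pi(2i)$. The subtlety is that neighbourhoods of \tl graphs need not be intervals (already the edges $\{1,2\},\{1,4\}$ form a \tl graph in which vertex~$1$ skips~$3$), so $\rho$ and $\lambda$ alone do not determine the graph; the encoding must process the vertices in order and use the two positions of each pair to record the incremental changes to the neighbourhood, so that no information is lost.

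With the map $\Phi\colon\tlset\to\Pset$ in hand, the proof splits into two verifications. For \emph{well-definedness} I would show that the excedance/deficiency inequalities of $\Dder$, together with the forced values at positions $2$ and $2n-1$, follow from the reach monotonicity above, and that the construction does output a genuine permutation (each value of $\oneto{2n}$ attained exactly once). For \emph{bijectivity} I would exhibit the inverse explicitly and prove that the graph it produces from an arbitrary $\pi\in\Dder$ is \tl. This last implication is the crux: one must argue that the absence of an X-property violation is equivalent to the permutation constraints, i.e.\ that a reconstructed graph fails the X-property precisely when the associated data violates one of the excedance/deficiency bounds or fails to be a permutation.

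I expect the hard part to be exactly this equivalence between the local, four-vertex X-property and the global rearrangement constraints defining $\Dder$. A clean way to manage it is induction on~$n$: deleting the top vertex sends $\tlset$ to $\mathcal{T}_{n-1}$, and the admissible neighbourhoods of the new top vertex are precisely the order ideals of the ``reaches-past'' partial order on $\oneto{n-1}$ (antisymmetry is clear, and transitivity follows from one more application of the X-property); on the permutation side, a suitable reduction deleting the forced entry $\pi(2n-1)=2n$ and renormalising should send $\Dder$ to $\tilde{\mathcal{D}}^2_{2(n-1)}$ and match the same set of choices. Verifying that $\Phi$ and its inverse are compatible with these two reductions — so that the induction closes — is where the real work lies, and it is also what pins down the encoding of the non-interval neighbourhoods.
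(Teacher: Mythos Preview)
Your proposal is a plan rather than a proof, and the central object --- the map $\Phi$ --- is never actually defined. You correctly observe that the reaches $\rho,\lambda$ do not determine the graph, and you say the encoding ``must process the vertices in order and use the two positions of each pair to record the incremental changes to the neighbourhood,'' but you never say what those incremental data are or how they fill the $2n$ slots of a permutation. Without a concrete $\Phi$, the well-definedness and bijectivity checks you outline cannot even begin. The inductive scheme has the same gap on the permutation side: you assert that deleting the forced entry $\pi(2n-1)=2n$ ``and renormalising'' sends $\Dder$ to $\tilde{\mathcal{D}}^2_{2(n-1)}$ with fibres matching the order ideals of the reaches-past poset of $G'$, but you specify neither the renormalisation nor any argument for the fibre correspondence --- and that correspondence \emph{is} the theorem. (Your observation that the reaches-past relation is a partial order whose order ideals are exactly the admissible neighbourhoods of the new top vertex is correct and pleasant, but it handles only the graph side; the permutation side, which you yourself flag as ``where the real work lies,'' is untouched.)

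For contrast, the paper's construction is nothing like a positional reach encoding. It builds $\Pi(G)$ as a product of transpositions: starting from $\pi_0=(1,2)(3,4)\cdots(2n-1,2n)$, each edge $\{a,b\}$ with $a<b$ contributes the transposition $(2a,2b-1)$, applied in an inner-to-outer order governed by the partial order $\{a,b\}\preceq\{c,d\}\iff c\le a<b\le d$. The inverse $\Gamma$ scans pairs in the same order, decides each edge by a parity/order criterion (``edge configuration'') on $\pi^{-1}(2a)$ and $\pi^{-1}(2b-1)$, and peels off the corresponding transposition. The X-property enters precisely in proving that $\Gamma$ lands in $\tlset$ and that $\Gamma\circ\Pi=\id$, via an analysis of which pairs are in edge versus non-edge configuration after deleting $\preceq$-minimal edges. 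This transposition-product idea is the missing ingredient; nothing in the reach heuristics points toward it.
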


\Cref{fig:examples} explicitly gives that bijection for $n = 3$. (Note that $\mathcal{T}_3$ contains simply all 3-vertex graphs.)

It is not hard to see that~$(2i,2i+1)\pi\in\Dder$ for every~$\pi\in\Dder$ and~$i\in[n-1]$, which implies that~$H_n$ is divisible by~$2^{n-1}$ (the numbers~$h_n = H_n/2^{n-1}$ are called \emph{normalized median Genocchi numbers}~\cite{normMedGenocchi,HZ99,Bigeni14}).
This can also easily be derived from~$\tlset$ by observing that in any graph~$G\in\tlset$ insertion or deletion of the edge~$\{i,i+1\}$ for~$i\in[n-1]$ preserves the X-property.
We remark that in our bijection the \emph{normalized Genocchi permutations} in~$\Dder$ as defined by~\textcite{HZ99} correspond to the \tl graphs which contain all the edges~$\{i,i+1\}$, $i\in[n-1]$.

\begin{figure}[t]
  \centering
  \begin{tikzpicture}
    \node[vertex] (1) at (0,0) {};
    \node[vertex] (2) at (1,0) {};
    \node[vertex] (3) at (2,0) {};
    \node at (1,-.5) {214365};
    \begin{scope}[shift={(3.5,0)}]
      \node[vertex] (1) at (0,0) {};
      \node[vertex] (2) at (1,0) {};
      \node[vertex] (3) at (2,0) {};
      \node at (1,-.5) {314265};
      \draw (1) -- (2);
    \end{scope}
    \begin{scope}[shift={(7,0)}]
      \node[vertex] (1) at (0,0) {};
      \node[vertex] (2) at (1,0) {};
      \node[vertex] (3) at (2,0) {};
      \node at (1,-.5) {215364};
      \draw (2) -- (3);
    \end{scope}
    \begin{scope}[shift={(10.5,0)}]
      \node[vertex] (1) at (0,0) {};
      \node[vertex] (2) at (1,0) {};
      \node[vertex] (3) at (2,0) {};
      \node at (1,-.5) {514362};
      \draw[curveedge] (1) to (3);
    \end{scope}
    \begin{scope}[shift={(0,-2)}]
      \node[vertex] (1) at (0,0) {};
      \node[vertex] (2) at (1,0) {};
      \node[vertex] (3) at (2,0) {};
      \node at (1,-.5) {315264};
      \draw (1) -- (2) -- (3);
    \end{scope}
    \begin{scope}[shift={(3.5,-2)}]
      \node[vertex] (1) at (0,0) {};
      \node[vertex] (2) at (1,0) {};
      \node[vertex] (3) at (2,0) {};
      \node at (1,-.5) {514263};
      \draw (1) -- (2);
      \draw[curveedge] (1) to (3);
    \end{scope}
    \begin{scope}[shift={(7,-2)}]
      \node[vertex] (1) at (0,0) {};
      \node[vertex] (2) at (1,0) {};
      \node[vertex] (3) at (2,0) {};
      \node at (1,-.5) {415362};
      \draw (2) -- (3);
      \draw[curveedge] (1) to (3);
    \end{scope}
    \begin{scope}[shift={(10.5,-2)}]
      \node[vertex] (1) at (0,0) {};
      \node[vertex] (2) at (1,0) {};
      \node[vertex] (3) at (2,0) {};
      \node at (1,-.5) {415263};
      \draw (1) -- (2) -- (3);
      \draw[curveedge] (1) to (3);
    \end{scope}
  \end{tikzpicture}
  \caption{The~$H_3=8$ permutations in~$\tilde{\mathcal{D}}^2_6$ with the corresponding graphs from~$\mathcal{T}_3$.}
  \label{fig:examples}
\end{figure}
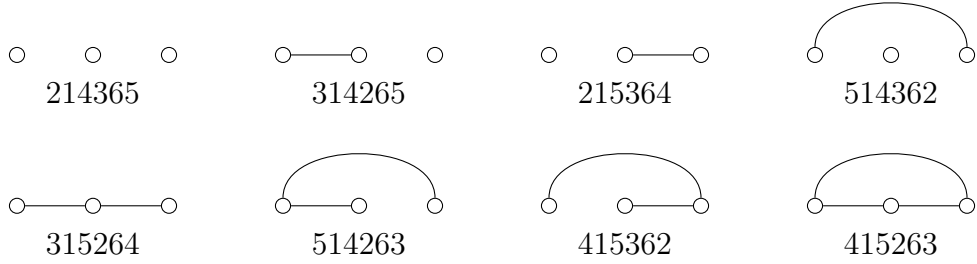

\section{Two maps between $\tlset{}$ and $\Dder$}

We start with a map~$\Pi$ that maps graphs to Dumont derangements of the second kind (notably, this will work for arbitrary graphs).
The underlying idea is to start with a specific Dumont permutation and then apply a sequence of transpositions, with each transposition corresponding to an edge of the graph.
To this end, we define a partial order on vertex pairs (``from inner to outer'').

\begin{definition}
  For~$a< b\in[n]$ and~$c < d\in[n]$, we define
  \[\{a,b\} \preceq \{c,d\} \iff c \leq a < b \leq d.\]
\end{definition}
\noindent
We call any ordering $\leq$ of a subset $E \subseteq \binom{[n]}{2}$ \emph{valid} if $e \preceq e' \implies e \leq e'$ for all $e, e' \in E$.
For a pair $\{a, b\}\in\binom{[n]}{2}$ with $a < b$, let $\tau(\{a, b\})$ denote the transposition $(2a, 2b-1)$.
We now define the map~$\Pi$ as follows.

\begin{definition}
  Let $G=([n],\{e_1 > \ldots > e_m\})$ be a graph with a valid edge ordering.
We define $\Pi\colon \Gset{} \to \Pset{}$ by \[\Pi(G) = \tau(e_m) \tau(e_{m-1}) \dots \tau(e_1) \pi_0,\]
where $\pi_0 \coloneqq (1,2)(3,4)\dots(2n-1,2n)\in \Dder$.
\end{definition}

\Cref{fig:example} depicts an example.

\begin{lemma}\label{thm:welldef}
$\Pi$ is well-defined.
\end{lemma}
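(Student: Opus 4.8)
The statement that $\Pi$ is well-defined must mean that the permutation $\Pi(G)$ does not depend on the particular valid edge ordering chosen: membership in $\Pset$ is automatic (a product of transpositions with $\pi_0$ is again a permutation of $[2n]$), and a valid ordering always exists since any finite partial order admits a linear extension. So the plan is to show that replacing one valid ordering of the edge set $E$ by another never changes the product $\tau(e_m)\cdots\tau(e_1)\pi_0$.

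First I would reduce the problem to a local commutation statement. A valid ordering is precisely a linear extension of the partial order $\preceq$ restricted to $E$. Invoking the standard fact that any two linear extensions of a finite poset are connected by a sequence of moves, each swapping two $\preceq$-incomparable elements occupying adjacent positions in the ordering, it suffices to prove: whenever two edges $e, e' \in E$ are incomparable under $\preceq$, the transpositions $\tau(e)$ and $\tau(e')$ commute. Indeed, swapping two adjacent edges in the ordering swaps two adjacent factors in the product $\tau(e_m)\cdots\tau(e_1)$, so if those factors commute the product is unchanged; iterating shows that all valid orderings yield the same permutation.

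It then remains to establish this commutation. Two transpositions commute precisely when they are equal or disjoint, so writing $e = \{a,b\}$ and $e' = \{c,d\}$ with $a<b$ and $c<d$, I would examine when $\{2a, 2b-1\}$ and $\{2c, 2d-1\}$ intersect. The key is a parity observation: $2a$ and $2c$ are even while $2b-1$ and $2d-1$ are odd, so the only possible coincidences are $2a = 2c$ (i.e. $a = c$) or $2b-1 = 2d-1$ (i.e. $b = d$), the cross terms $2a = 2d-1$ and $2b-1 = 2c$ being impossible. In particular distinct edges give distinct transpositions, so for $e \ne e'$ commuting is equivalent to disjointness. A short case check then shows that sharing a left coordinate ($a = c$, $b \ne d$) or a right coordinate ($b = d$, $a \ne c$) forces $e$ and $e'$ to be $\preceq$-comparable; for instance if $a = c$ and $b < d$ then $c \le a < b \le d$ yields $e \preceq e'$, and the remaining subcases are symmetric. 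Taking the contrapositive, incomparable edges yield disjoint and hence commuting transpositions.

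I expect the main obstacle to be the clean reduction in the second paragraph, namely invoking (or briefly justifying) the connectivity of the linear extensions of $\preceq$ under adjacent incomparable swaps; the commutation check itself is a short parity-and-cases argument that I anticipate being routine.
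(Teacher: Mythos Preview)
Your proposal is correct and follows essentially the same approach as the paper: both reduce well-definedness to the fact that $\preceq$-incomparable edges $\{a,b\}$ and $\{c,d\}$ satisfy $a\neq c$ and $b\neq d$, whence by parity the four numbers $2a,2b-1,2c,2d-1$ are pairwise distinct and the corresponding transpositions commute. The paper compresses your second paragraph into the single phrase ``repeated application of this argument proves the claim,'' whereas you spell out the standard linear-extension connectivity fact; this is a difference in exposition only.
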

\begin{proof}
We have to show that $\Pi(G)$ does not depend on the chosen edge ordering.
To this end, let $e_i = \{a, b\}$ and $e_j = \{c, d\}$ with $a < b$ and $c < d$ be incomparable with respect to~$\preceq$.
Then, $a \neq c$ and $b \neq d$. Therefore, $2a$, $2b-1$, $2c$, and $2d-1$ are pairwise distinct,
and thus, $\tau(e_i) = (2a, 2b-1)$ and $\tau(e_j) = (2c, 2d-1)$ commute.
Repeated application of this argument proves the claim.
\end{proof}

\begin{lemma}
  For every graph~$G\in\mathcal{G}_n$, it holds~$\Pi(G)\in \Dder$.
\end{lemma}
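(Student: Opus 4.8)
The plan is to show that $\Pi(G)$ satisfies the two defining conditions of $\Dder$: first, that it is a Dumont permutation of the second kind (i.e.\ $\Pi(G)(2i-1) \ge 2i-1$ and $\Pi(G)(2i) < 2i$ for all $i$), and second, that it is a derangement in the required sense ($\Pi(G)(2i-1) > 2i-1$ for all $i \in [n-1]$). Since $\Pi(G)$ is built from $\pi_0$ by successively applying transpositions $\tau(\{a,b\}) = (2a, 2b-1)$, I would track how each such transposition affects these inequalities, arguing inductively over the sequence of edges applied.

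First I would record the effect of a single transposition $\tau(\{a,b\})$ on a permutation $\sigma$. Left-multiplying by $(2a, 2b-1)$ swaps the \emph{values} $2a$ and $2b-1$ in $\sigma$; that is, wherever $\sigma$ took the value $2a$ it now takes $2b-1$ and vice versa. The key observation is that $2a$ is an even number (sitting in an ``even'' value slot, which Dumont permutations want to be strictly below its even index) and $2b-1$ is odd, with $2a < 2b-1$ since $a < b$. I would then examine the two positions whose values get swapped and check that both defining inequalities are preserved. The cleanest route is to prove a loop invariant: after applying any prefix $\tau(e_k)\cdots\tau(e_1)$ of the transpositions to $\pi_0$, the resulting permutation already lies in $\Dder$. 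Establishing that $\pi_0 \in \Dder$ is immediate (it is stated in the definition of $\Pi$), so the work is entirely in the inductive step.

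For the inductive step I would let $\sigma$ be a permutation already in $\Dder$ and show $\tau(\{a,b\})\sigma \in \Dder$ as well. The heart of the matter is understanding \emph{which} positions hold the values $2a$ and $2b-1$ in $\sigma$. Here I expect to need structural information about Dumont permutations: because $\sigma(2i) < 2i$ and $\sigma(2i-1) \ge 2i-1$, the value $2b-1$ (being odd) and the value $2a$ (being even) each occupy positions constrained by these inequalities, so swapping them moves values in a controlled ``inner to outer'' direction consistent with the partial order $\preceq$. I would argue that the swap keeps every even position below its index and every odd position at or above its index, and moreover strictly increases (or leaves strictly above) the relevant odd positions so the derangement condition is maintained.

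The main obstacle, I expect, is the bookkeeping of exactly which positions $p, q$ satisfy $\sigma(p) = 2a$ and $\sigma(q) = 2b-1$, and verifying the inequalities at those two positions after the swap without accidentally breaking them elsewhere. Because a single transposition only alters $\sigma$ at the two positions whose values are swapped, all other positions retain their validity automatically, so the check localizes to those two positions — but pinning down their parities and their relation to $2a$ and $2b-1$ is the delicate part, and it is where the hypothesis $\sigma \in \Dder$ (rather than merely $\sigma \in \Dset$) must be used to guarantee the strict derangement inequalities survive. I would handle this by a short case analysis on the parities of $p$ and $q$, using $a < b$ throughout to control the direction of the inequalities.
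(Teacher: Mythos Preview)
Your inductive step is false as stated. You propose to show that for any $\sigma \in \Dder$ and any $a<b$, the permutation $\tau(\{a,b\})\sigma = (2a,2b-1)\sigma$ again lies in $\Dder$, via a case analysis on the parities of the positions $p=\sigma^{-1}(2a)$ and $q=\sigma^{-1}(2b-1)$. But take $n=3$, $\sigma=415263\in\Dder$ and $\{a,b\}=\{1,3\}$, so $(2a,2b-1)=(2,5)$. Then $(2,5)\sigma = 412563$, which has value~$2$ at the odd position~$3$ and is therefore not even in $\Dset$. The problematic case in your analysis is exactly the one you cannot close: when $p$ is even and $2b-1\ge p$ (here $p=4$, $2b-1=5$), nothing in the hypotheses $\sigma\in\Dder$ and $a<b$ prevents the swap from placing too small a value at~$q$. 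The derangement strengthening you single out does not help.

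What is missing is the valid edge ordering. The intermediate permutation $\sigma_k=\tau(e_k)\cdots\tau(e_1)\pi_0$ carries much more structure than bare membership in $\Dder$: the edges $e_1,\dots,e_k$ already applied are all $\succeq$-above or $\preceq$-incomparable to $e_{k+1}$, which pins down where the values $2a$ and $2b-1$ can sit. Your invariant ``$\sigma_k\in\Dder$'' discards this information and is too weak to carry the induction. The paper avoids the problem by not inducting on prefixes at all: it fixes a position $i=2v$, starts from $\pi_0(i)=i-1$, and follows that entry through the full sequence of swaps. Whenever the current value (say $2u$) is next swapped by some later edge $e_{j'}=\{u,w\}$, comparability with the earlier edge $e_j=\{u,v\}$ together with $e_{j'}<e_j$ in the valid order forces $w<v$, so the value at position~$i$ stays below~$i$ throughout. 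If you want to rescue an invariant-style argument, you would need to formulate and maintain a stronger invariant that encodes this nesting constraint, not just $\sigma_k\in\Dder$.
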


\begin{proof}
  Let $G=([n],E=\{e_1 > \ldots > e_m\})$ be a graph with a valid edge ordering
  and let~$\pi \coloneqq \Pi(G)$.
  We will show that~$\pi(i) < i$ holds for each even~$i\in[2n]$ (the argument for odd~$i$ is analogous).

  Let~$i=2v$, $v\in[n]$, and consider the sequence of transpositions~$\tau(e_1),\ldots,\tau(e_m)$.
  Note that~$\pi_0(i)=i-1$. Hence, if the number~$i-1$ is never swapped, then~$\pi(i)=\pi_0(i)<i$.
  Otherwise, let~$\tau(e_j)$ be the first swap involving~$i-1$.
  Since~$i-1$ is odd, by definition of~$f$, we have~$\tau(e_j)=(2u,i-1)$ for some~$u < v$.
  Now, if~$2u$ is never swapped again after the~$j$-th swap, then clearly~$\pi(i)=2u<i$.
  Hence, let~$\tau(e_{j'})$, $j' > j$, be the next swap involving~$2u$.
  Since~$2u$ is even and~$e_j > e_{j'}$, we have~$\tau(e_{j'})=(2u,2w-1)$ for some~$u < w < v$. Since the number of swaps is finite, the above arguments can now be repeated to prove the claim.
\end{proof}

\begin{figure}
	\centering
	\begin{tikzpicture}[scale=1.3]
		\draw[curveedge]
			\foreach \i in {1,...,6} {
				(\i, 0) node[vertex] (v\i) {}
			}
			(v1) to (v3)
			(v1) to (v5)
			(v1) to (v6)
			(v2) to (v6)
			(v3) to (v5)
			;
	\end{tikzpicture}
	\caption{A terrain-like graph~$G$. The corresponding Dumont derangement of the second kind is
		$\Pi(G) = 4,1,11,3,9,2,8,7,10,5,12,6$.
		Observe that the isolated vertex corresponds to the pair $(7, 8)$ with $\Pi(G)(7) = \pi_0(7) = 8$ and $\Pi(G)(8) = \pi_0(8) = 7$.
	}
	\label{fig:example}
\end{figure}
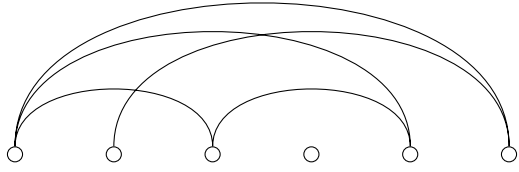

We now continue with the inverse map~$\Gamma$ which maps permutations to graphs.
We first define in which case a pair of an even and an odd number corresponds to an edge.

\begin{definition}
  Let $\pi\in\Dder$ and let $2\le i < j \le 2n-1$, where~$i$ is even and~$j$ is odd.
  Then, $i$ and $j$ are in \emph{edge configuration} if the following is true:
  \[\pi^{-1}(i) < \pi^{-1}(j)\;\iff \; \pi^{-1}(i) \equiv \pi^{-1}(j) \pmod{2}.\]
  Otherwise, $i$ and $j$ are in \emph{non-edge configuration}.
\end{definition}

Observe that $i$ and~$j$ are in edge configuration in~$\pi$ if and only if
they are in non-edge configuration in~$(i,j)\pi$ and vice versa.

\begin{definition}\label{def:inverse}
We define a map~$\Gamma\colon \Dder \to \Gset$ as follows:
Given~$\pi\in\Dder$, and starting with an edgeless graph,
we iterate over all pairs $\{u,v\} \in \binom{[n]}{2}$, $u<v$, ascendingly with respect to some valid order $\leq$.
If $2u$ and $2v-1$ are in edge configuration in~$\pi$, then we insert the edge~$\{u,v\}$ and continue with~$(2u,2v-1)\pi$ instead of~$\pi$.
\end{definition}

\begin{lemma}\label{thm:inverse-welldef}
$\Gamma$ is well-defined.
\end{lemma}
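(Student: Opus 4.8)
The plan is to observe that the sole freedom in the definition of $\Gamma$ lies in the choice of the valid order $\leq$, which is exactly a linear extension of the poset $(\binom{[n]}{2}, \preceq)$. Thus well-definedness amounts to showing that the output graph is the same for every linear extension. I would invoke the standard fact that any two linear extensions of a finite poset can be transformed into one another by a sequence of swaps of consecutive incomparable elements. This reduces the whole claim to a single local statement: if two incomparable pairs $\{u,v\}$ and $\{u',v'\}$ are processed consecutively, then exchanging the order in which they are handled alters neither the set of inserted edges nor the running permutation.

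To prove this local statement, I would first recall from the proof of \Cref{thm:welldef} that incomparability of $\{u,v\}$ and $\{u',v'\}$ forces $u \neq u'$ and $v \neq v'$, so that the four numbers $2u$, $2v-1$, $2u'$, $2v'-1$ are pairwise distinct and the transpositions $(2u, 2v-1)$ and $(2u', 2v'-1)$ have disjoint support. Writing $\sigma$ for the permutation reached just before the two pairs are processed (which is identical in both orderings, since all earlier pairs are handled the same way), I would then track two things: whether each pair is in edge configuration, and the permutation that results.

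The key calculation is that the edge-configuration test for $\{u,v\}$ reads off only the values $\sigma^{-1}(2u)$ and $\sigma^{-1}(2v-1)$, and these are left untouched when the other pair is processed first: applying $(2u', 2v'-1)$ replaces $\sigma^{-1}$ by $\sigma^{-1}(2u', 2v'-1)$, and since $2u$ and $2v-1$ lie outside $\{2u', 2v'-1\}$, their preimages do not move. Hence each pair's edge-configuration status is independent of whether the other has already been processed, so the same subset of $\{\{u,v\}, \{u',v'\}\}$ is inserted in either order; and because the two transpositions commute, whichever are actually applied compose to the same permutation regardless of order.

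The step I expect to be the main obstacle is precisely this interference analysis, because, unlike the map $\Pi$, the map $\Gamma$ chooses its transpositions dynamically: which edges are inserted is decided on the fly from the current permutation, so I cannot simply appeal to a fixed commuting family of transpositions. The disjoint-support observation is what breaks this dependence, simultaneously guaranteeing that the decisions do not affect each other and that the transpositions commute; once that is in place, the linear-extension connectivity argument finishes the proof.
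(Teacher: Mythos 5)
Your proposal is correct and follows essentially the same route as the paper: reduce to swapping two consecutive $\preceq$-incomparable pairs, note that incomparability makes $2u, 2v-1, 2u', 2v'-1$ pairwise distinct, and conclude that the transpositions commute and that neither swap affects the other pair's edge-configuration test. Your write-up merely makes explicit two points the paper leaves implicit, namely the linear-extension connectivity argument and the computation showing $\sigma^{-1}(2u)$, $\sigma^{-1}(2v-1)$ are untouched.
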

\begin{proof}
We need to show that $\Gamma(\pi)$ does not depend on the chosen order~$\leq$.
Let $\{a, b\}$ and $\{c, d\}$ be incomparable with respect to~$\preceq$.
Then, $2a$, $2b-1$, $2c$, and $2d-1$ are pairwise distinct.
Thus, the two corresponding transpositions $(2a, 2b-1)$ and $(2c, 2d-1)$ commute 
and do not influence whether the respective other pair is in edge configuration.
Therefore, $\Gamma(\pi)$ is not affected by changing the relative order of $\{a, b\}$ and $\{c, d\}$.
\end{proof}

In order to prove that~$\Gamma$ always yields a \tlgraph, we need the following helpful lemma.

\begin{lemma}\label{lem:valid-swap}
  Let~$\pi'$ be the permutation obtained after iterating over~$\{u,v\}$ in the definition of~$\Gamma$. Then, it holds that $\pi'\in\Dder$ and
  $2x$ and~$2y-1$ are in non-edge configuration in~$\pi'$ for all~$x<y$ with~$\{x,y\}\leq\{u,v\}$.
\end{lemma}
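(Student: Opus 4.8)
The plan is to prove the two assertions simultaneously by induction over the pairs in the order in which $\Gamma$ processes them. Using \Cref{thm:inverse-welldef} I fix one valid order $\leq$ once and for all. The example computations strongly suggest that the two parts of the statement are interdependent, so I would carry them along together as a single loop invariant. Write $\tilde\pi$ for the permutation immediately before $\{u,v\}$ is processed (so $\tilde\pi=\pi$ if $\{u,v\}$ is the first pair), and assume inductively that $\tilde\pi\in\Dder$ and that every already-processed pair $\{x,y\}<\{u,v\}$ is in non-edge configuration in $\tilde\pi$. If $2u$ and $2v-1$ are in non-edge configuration in $\tilde\pi$, then $\pi'=\tilde\pi$ and both claims are immediate. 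Hence the whole argument concentrates on the swap case $\pi'=(2u,2v-1)\tilde\pi$, in which the pair $\{u,v\}$ itself flips to non-edge configuration because applying $(2u,2v-1)$ exchanges edge and non-edge configuration for $\{u,v\}$.

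The first ingredient is a positional reformulation of membership in $\Dder$: for $\sigma\in\Dder$ and any value $w$, its position $t=\sigma^{-1}(w)$ satisfies $t>w$ when $t$ is even and $t\le w$ when $t$ is odd, since this is exactly $\sigma(2i)<2i\le\sigma(2i-1)$ read through $\sigma^{-1}$. I would apply this to $p\coloneqq\tilde\pi^{-1}(2u)$ and $q\coloneqq\tilde\pi^{-1}(2v-1)$ to read off the admissible parity-and-order patterns of $p$ and $q$, and combine them with the edge-configuration hypothesis $[\,p<q\,]\iff[\,p\equiv q\,]$. Since the swap changes only the values at positions $p$ and $q$ (to $2v-1$ and $2u$, respectively), verifying $\pi'\in\Dder$ reduces to checking the positional condition at these two spots for the new values; likewise, a pair whose endpoints avoid $\{u,v\}$ has its configuration untouched and stays non-edge by the induction hypothesis, so only pairs sharing an endpoint with $\{u,v\}$ require attention.

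The main obstacle is that the local data at $p$ and $q$ is not by itself sufficient: there are parity/order patterns compatible with both the Dumont conditions at $p,q$ and the edge configuration of $\{u,v\}$ that would nonetheless break $\Dder$ or flip a previously processed pair to edge configuration. The affected previously-processed pairs are precisely the nested families $\{u,y\}$ with $u<y<v$ (sharing the even element $2u$) and $\{x,v\}$ with $u<x<v$ (sharing the odd element $2v-1$); all of these satisfy $\{u,y\}\prec\{u,v\}$ and $\{x,v\}\prec\{u,v\}$, hence are non-edge in $\tilde\pi$ by hypothesis. For such a pair the swap replaces one relevant position ($\tilde\pi^{-1}(2u)$ by $\tilde\pi^{-1}(2v-1)$, or vice versa), so preservation of non-edge configuration becomes a statement about the relative order and parities of three positions. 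This three-position comparison is \emph{false} in general (for instance the abstract pattern of positions $2,4,3$ would flip a non-edge pair to an edge), so the crux is to show that every such offending ordering forces one of the nested pairs into edge configuration already in $\tilde\pi$, contradicting the induction hypothesis, and that the very same exclusion supplies the two positional conditions needed for $\pi'\in\Dder$. I expect the symmetric bookkeeping of these cases for the two nested families, organized by the even/odd position dichotomy above, to be the technical heart of the proof.
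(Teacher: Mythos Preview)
Your plan is correct and mirrors the paper's proof almost exactly: the paper also proceeds by induction over the processed pairs, reduces to the swap case, splits according to the parities and relative order of $p=\tilde\pi^{-1}(2u)$ and $q=\tilde\pi^{-1}(2v-1)$, and in each case derives the needed inequalities (both for $\pi'\in\Dder$ and for preservation of non-edge configuration) by showing that their failure would force some nested pair $\{u,y\}$ or $\{x,v\}$ with $u<x,y<v$ into edge configuration in~$\tilde\pi$, contradicting the inductive hypothesis. The specific nested pair the paper exploits for the $\Dder$ check is the one whose odd (resp.\ even) element equals $p-1$ or $q+1$, which is precisely the ``offending ordering forces a nested pair into edge configuration'' mechanism you anticipate.
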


\begin{proof}
  Proof by induction. The claim trivially holds before the first iteration.
  
  Let $i\coloneqq 2u$, $j \coloneqq 2v-1$, $p_i\coloneqq\pi^{-1}(i)$, and $p_j\coloneqq\pi^{-1}(j)$.
  Assume that $i$ and~$j$ are in edge configuration in~$\pi$ (otherwise we are done) and let~$\pi'\coloneqq(i,j)\pi$.
  We do a case distinction:

  \textbf{Case 1:} If $p_i < p_j$ and both are even (the case when both are odd is symmetric), then $i < p_i$ (that is, $p_i \ge i+2$) and $j < p_j$.
  
  We first show that $\pi'\in\Dder$.
  Assume for contradiction that $p_i < j$, then we have $i < p_i-1 < j$.
  Since $\{u, p_i/2\}\prec\{u, v\}$, it follows by assumption that $i$ and $p_i-1$ are in non-edge configuration in~$\pi$.
  Thus, $\pi^{-1}(p_i-1)$ is odd if and only if it is larger than~$p_i$, which is a contradiction to $\pi \in \Dder$.
  Therefore $j < p_i$, so we have~$i < j < p_i < p_j$ and $\pi'\in \Dder$.
  
  It remains to check that after swapping~$i$ and~$j$, all previously processed pairs remain in non-edge configuration.
  Clearly, we only need to check pairs involving~$i$ or~$j$ since other pairs are not affected by this swap.
  By assumption, each odd~$j'$ with~$i<j'<j$ was in non-edge configuration with~$i$ in~$\pi$, and thus~$\pi^{-1}(j') < p_i$ since otherwise we obtain a contradiction.
  Similarly, for each even~$i'$ with~$i < i' < j$, we obtain that~$\pi^{-1}(i') < p_i$ or~$\pi^{-1}(i') > p_j$.
  Hence, all considered pairs are still in non-edge configuration in~$\pi'$.

  \textbf{Case 2:} If $p_i > p_j$, then let~$p_j$ be odd and~$p_i$ be even (the reverse case is symmetric), that is,~$j > p_j$ and~$i < p_i$.
  To show that~$\pi'\in\Dder$, first assume for contradiction that~$i < p_j$, that is, we have~$i < p_j+1 < j$ and $\{(p_j+1)/2,v\}\prec\{u,v\}$.
  Hence, $p_j+1$ and~$j$ are in non-edge configuration in~$\pi$ by assumption. Thus,~$\pi^{-1}(p_j+1)$ is odd if and only if it is larger than~$p_j$, which contradicts~$\pi\in\Dder$.
  Therefore,~$i > p_j$ holds. Next, assume for contradiction that~$j > p_i$. Similarly, since~$\{u,p_i/2\}\prec\{u,v\}$, it follows that~$i$ and~$p_i-1$ are in non-edge configuration in~$\pi$, which yields a contradiction. Hence, we obtain that~$p_j < i < j < p_i$ and thus~$\pi'\in\Dder$.

  To check that all previously considered pairs are still in non-edge configuration in~$\pi'$, let $j'$ be odd with~$i< j'<j$ and note that $\pi^{-1}(j') > p_i$ is not possible since this would imply that $\pi^{-1}(j')$ is odd, yielding a contradiction. Thus, $\pi^{-1}(j')$ is even and $p_j < \pi^{-1}(j') < p_i$.
  Analogously, for each even~$i'$ with $i<i'<j$, we obtain that~$\pi^{-1}(i')$ must be odd and~$p_j < \pi^{-1}(i') < p_i$.
  Hence, all considered pairs are in non-edge configuration in~$\pi'$.
\end{proof}

\begin{lemma}
  For every permutation~$\pi\in\Dder$, it holds~$\Gamma(\pi)\in\tlset$.
\end{lemma}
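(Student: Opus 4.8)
The goal is to show that the graph $G \coloneqq \Gamma(\pi)$ satisfies the X-property, namely that whenever $\{a,c\}, \{b,d\} \in E(G)$ with $a < b < c < d$, the edge $\{a,d\}$ is also present. The plan is to translate each of these three edge-membership statements into a statement about edge configurations in $\pi$ via \Cref{lem:valid-swap}, and then derive the desired membership from the constraints on inverse-image positions.

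First I would set up the machinery carefully. Since $\Gamma$ is well-defined by \Cref{thm:inverse-welldef}, I may fix a convenient valid order $\leq$ for the analysis. The key point I would extract from \Cref{lem:valid-swap} is an intrinsic characterization of edge membership: the edge $\{x,y\}$ (with $x<y$) is inserted precisely when, at the moment $\{x,y\}$ is processed, the numbers $2x$ and $2y-1$ are in edge configuration; and immediately after, they are in non-edge configuration, a state that persists. I would choose the order so that the three relevant pairs $\{a,c\}, \{b,d\}, \{a,d\}$ are handled in a controlled sequence — exploiting that, under $\preceq$, we have $\{b,d\} \prec \{a,d\}$ and $\{a,c\} \prec \{a,d\}$ (since $a \le a < d \le d$ and $a \le b < d \le d$, while $a \le a < c \le d$), so that $\{a,d\}$ is the outermost of the three. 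The strategy is to assume $\{a,c\}$ and $\{b,d\}$ are edges and, at the step where $\{a,d\}$ is processed, show $2a$ and $2d-1$ must be in edge configuration.

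The core of the argument will be a positional analysis of the inverse images just before $\{a,d\}$ is processed, written $p_{x} \coloneqq \pi'^{-1}(2x)$ and $q_{y} \coloneqq \pi'^{-1}(2y-1)$ for the current permutation $\pi'$, which lies in $\Dder$ by \Cref{lem:valid-swap}. The hypotheses give that $\{a,c\}$ and $\{b,d\}$ were edges, so after their insertion $2a, 2c-1$ and $2b, 2d-1$ are each in non-edge configuration, and this carries to $\pi'$. I would combine these non-edge relations with the derangement structure of $\pi'$ (odd entries map up, even entries map down, as in the two definitions) and the ordering $a<b<c<d$ to pin down the relative order of the four positions $p_a, q_c, p_b, q_d$. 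The detailed case analysis — mirroring the two cases in the proof of \Cref{lem:valid-swap} according to whether certain positions are odd or even and how they compare — should force $\pi'^{-1}(2a)$ and $\pi'^{-1}(2d-1)$ into the pattern required for edge configuration, giving $\{a,d\} \in E(G)$.

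The main obstacle I anticipate is the bookkeeping in this case distinction: the edge/non-edge conditions are parity-sensitive biconditionals, so I would need to track carefully, for each of $2a, 2c-1, 2b, 2d-1$, both the parity of its inverse-image position and the ordering of those positions, and verify that every combination consistent with $\pi' \in \Dder$ and the two assumed non-edge configurations indeed yields $2a, 2d-1$ in edge configuration. A secondary subtlety is ensuring the chosen valid order really lets me treat $\{a,c\}$ and $\{b,d\}$ as already decided before $\{a,d\}$; this is where invoking $\preceq$-comparability of these inner pairs with $\{a,d\}$, together with well-definedness, is essential. Once the positional constraints are established, concluding edge configuration for $\{a,d\}$ should be a direct check.
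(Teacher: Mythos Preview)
Your plan has a genuine gap: the constraints you extract about $\pi'$ (the permutation at the step $\{a,d\}$) do not encode the hypothesis that $\{a,c\}$ and $\{b,d\}$ are edges. By \Cref{lem:valid-swap}, \emph{every} pair $\{x,y\}$ already processed---edge or not---is in non-edge configuration in $\pi'$. So the facts you propose to use, namely that $(2a,2c-1)$ and $(2b,2d-1)$ are in non-edge configuration in $\pi'$, hold regardless of whether $\{a,c\},\{b,d\}\in E$. They therefore cannot force $(2a,2d-1)$ into edge configuration; indeed, for $\pi=\pi_0$ one reaches $\pi'=\pi_0$, all inner pairs are in non-edge configuration, and yet $(2a,2d-1)$ is in non-edge configuration too. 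Your ``detailed case analysis'' on $\pi'$ alone is thus doomed from the start.

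What is missing is a mechanism that actually remembers that the swaps at $\{a,c\}$ and $\{b,d\}$ occurred. The paper does this by tracking positions \emph{through} those swaps rather than reasoning only about the final $\pi'$. Concretely (in the paper's labeling $a<c<b<d$), one first replaces $b$ by the \emph{rightmost} neighbour of $a$ below $d$ and $c$ by the \emph{leftmost} neighbour of $d$ above $a$, so that $\{a,b\}\prec\{a,d\}$ and $\{c,d\}\prec\{a,d\}$ are covering relations in~$E$. Choosing the valid order accordingly, the swap at $\{a,b\}$ sends $2a$ to the former position of $2b-1$, and no later swap before $\{a,d\}$ touches $2a$; similarly for $2d-1$ and $2c$. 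This yields
\[
\pi_{a,d}^{-1}(2a)=\pi_{a,b}^{-1}(2b-1),\qquad \pi_{a,d}^{-1}(2d-1)=\pi_{a,b}^{-1}(2c),
\]
so that the edge configuration of $(2a,2d-1)$ in $\pi_{a,d}$ is exactly the non-edge configuration of $(2c,2b-1)$ in the earlier permutation $\pi_{a,b}$, which holds by \Cref{lem:valid-swap} since $\{c,b\}\prec\{a,b\}$. Both the covering-relation normalization and the position-tracking identity are essential and absent from your outline.
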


\begin{proof}
  Let~$E$ be the set of edges of~$\Gamma(\pi)$.
  We need to show that~$\Gamma(\pi)$ satisfies the X-property, that is, if~$\{a,b\},\{c,d\}\in E$ with~$a < c < b < d$, then also~$\{a,d\}\in E$.
  Let $\pi_{a,d}$ denote the permutation considered by~$\Gamma$ in the iteration processing $\{a, d\}$ (\Cref{def:inverse}).
  Then, we need to show that, $2a$ and~$2d-1$ are in edge configuration in~$\pi_{a,d}$.
  Without loss of generality, we assume that~$b=\max\{i\mid i < d, \{a,i\}\in E\}$ and $c=\min\{i\mid i > a, \{i,d\}\in E\}$.
  Then, $\{a, b\} \prec \{a, d\}$ and $\{c, d\} \prec \{a, d\}$ and both of these are \emph{covering relations}
  (that is, there is no~$e\in E$ with $\{a, b\} \prec e \prec \{a, d\}$ or $\{c, d\} \prec e \prec \{a, d\}$).
  Moreover, we may choose the valid order used in the definition of~$\Gamma$ such that $\{a, b\} < \{c, d\} < \{a, d\}$ with no other elements which are incomparable to~$\{a,d\}$ with respect to~$\prec$ in between.
  Consequently, we have
  \begin{align*}
    \pi_{a,d}^{-1}(2a) &= \pi_{c,d}^{-1}(2a) = \pi_{a,b}^{-1}(2b-1) \text{ and}\\
    \pi_{a,d}^{-1}(2d-1) &=\pi_{c,d}^{-1}(2c) = \pi_{a,b}^{-1}(2c).
  \end{align*}
  Since $\{c, b\} \prec \{a, b\}$, we know by \cref{lem:valid-swap} that $(2c,2b-1)$ is in non-edge configuration in~$\pi_{a,b}$.
  Thus, $(2a, 2d-1)$ is in edge configuration in $\pi_{a,d}$.
\end{proof}

For a bijection, it remains to show that~$\Pi$ and~$\Gamma$ are inverse of each other.

\section{Proof that~$\Pi$ and~$\Gamma$ are mutually inverse}

In the following, for vertices $1 \leq v < w \leq n$, we say that $v$~is \emph{left of}~$w$ (and $w$~\emph{right of}~$v$).

\subsection{$\Gamma \circ \Pi = \id_{\tlset}$}

We start with the following simple observation.

\begin{observation}\label{thm:remove-edge}
Let $G = (\oneto{n}, E) \in \tlset$ be a \tlgraph and let $\{a, b\} \in E$ with~$a<b$ be a $\preceq$-minimal edge.
Then $G' := G - \{\{a, b\}\}$ is \tl{} and $\Pi(G) = (2a, 2b-1)\Pi(G')$.
\end{observation}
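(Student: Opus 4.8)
The goal is to prove \Cref{thm:remove-edge}: for a \tlgraph $G = ([n], E)$ and a $\preceq$-minimal edge $\{a,b\} \in E$, the graph $G' = G - \{\{a,b\}\}$ is \tl and $\Pi(G) = (2a,2b-1)\Pi(G')$.

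The plan is to handle the two assertions separately. For the first, I would verify that deleting a single edge preserves the X-property. The X-property is an implication whose conclusion is the existence of some edge $\{a',d'\}$; removing an edge can only ever falsify a conclusion, never create a new hypothesis pair that was not already present. So the only way deletion could break the X-property is if the deleted edge $\{a,b\}$ were itself the \emph{conclusion} edge $\{a',d'\}$ forced by some crossing pair $\{a',c'\},\{b',d'\}$ with $a'<b'<c'<d'$, where $\{a',d'\} = \{a,b\}$. I would argue this cannot happen for a $\preceq$-minimal edge: if $\{a,b\} = \{a',d'\}$ were forced this way, then the witnessing edges $\{a',c'\}$ and $\{b',d'\}$ would both satisfy $a' \le a' < c' \le d' = b$ and $a = a' \le b' < d' \le b$, making them strictly $\preceq$-smaller than $\{a,b\}$, contradicting minimality. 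Hence no crossing pair forces $\{a,b\}$, and its removal leaves $G'$ \tl.

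For the second assertion, I would use the freedom in the definition of $\Pi$ to choose a valid edge ordering of $G$ in which the $\preceq$-minimal edge $\{a,b\}$ comes \emph{last}, i.e.\ it is the smallest edge $e_m$ in the ordering $e_1 > \dots > e_m$. This is permissible: since $\{a,b\}$ is $\preceq$-minimal, there is no edge $e$ with $e \prec \{a,b\}$, so placing it at the bottom of the order violates no validity constraint $e \preceq e' \implies e \le e'$. The remaining edges $e_1 > \dots > e_{m-1}$ then form a valid ordering of $E' = E \setminus \{\{a,b\}\}$, the edge set of $G'$. By the definition of $\Pi$,
\[
  \Pi(G) = \tau(e_m)\tau(e_{m-1})\cdots\tau(e_1)\pi_0 = \tau(\{a,b\})\bigl(\tau(e_{m-1})\cdots\tau(e_1)\pi_0\bigr) = (2a,2b-1)\,\Pi(G'),
\]
where the inner parenthesized product is exactly $\Pi(G')$ computed with the inherited valid ordering, and $\tau(\{a,b\}) = (2a,2b-1)$ by definition. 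Here I rely on \Cref{thm:welldef} to know that $\Pi(G')$ is independent of the chosen valid ordering, so using the inherited one is harmless.

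I expect the main obstacle to be the first part rather than the computation: one must be careful that $\preceq$-minimality is genuinely the right hypothesis to rule out $\{a,b\}$ being a forced conclusion edge, and to check the inequalities relating the witness edges to $\{a,b\}$ under $\preceq$ precisely (in particular confirming the comparisons are \emph{strict} so that minimality is contradicted). The second part is essentially bookkeeping once the ordering trick is set up, its only subtlety being the appeal to well-definedness of $\Pi$ to justify restricting the chosen global ordering to $E'$.
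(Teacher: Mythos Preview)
Your proposal is correct and follows exactly the same approach as the paper's proof, which simply asserts that deleting a $\preceq$-minimal edge cannot violate the X-property and that the second claim follows from choosing a valid edge ordering in which $\{a,b\}$ is minimal. You have merely unpacked the ``clearly'' in the first part by exhibiting the contradiction with $\preceq$-minimality, and made the bookkeeping for the second part explicit.
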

\begin{proof}
  Clearly, deleting the edge~$\{a,b\}$ cannot violate the X-property since~$\{a,b\}$ is $\preceq$-minimal. The second claim follows from choosing a valid edge ordering~$\leq$ where~$\{a,b\}$ is minimal.
\end{proof}

The following lemma will be crucial for showing that~$\Gamma$ yields the correct set of edges.

\begin{lemma}\label{thm:non-edge-config}
  Let~$G = ([n], E) \in \tlset$ be a \tlgraph and $1 \leq x < y \leq n$.
  If there is no edge~$\{u, v\} \in E$ with $\{u, v\} \preceq \{x, y\}$,
  then $(2x, 2y-1)$~is in non-edge configuration in~$\pi \coloneqq \Pi(G)$.
\end{lemma}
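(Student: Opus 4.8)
The plan is to prove the statement by induction on the number $|E|$ of edges, using \Cref{thm:remove-edge}. If $E=\emptyset$ then $\pi=\pi_0$, and since $\pi_0^{-1}(2x)=2x-1$ and $\pi_0^{-1}(2y-1)=2y$, the two positions have opposite parities and satisfy $2x-1<2y$; as non-edge configuration means precisely $[\pi^{-1}(2x)<\pi^{-1}(2y-1)]\iff[\pi^{-1}(2x)\not\equiv\pi^{-1}(2y-1)]$, the base case follows. For the inductive step I pick a $\preceq$-minimal edge $\{a,b\}\in E$ and set $G'\coloneqq G-\{\{a,b\}\}$ and $\pi'\coloneqq\Pi(G')$. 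By \Cref{thm:remove-edge} we have $G'\in\tlset$ and $\pi=(2a,2b-1)\pi'$, and since $E'\subseteq E$ the graph $G'$ still has no edge $\preceq\{x,y\}$, so the induction hypothesis gives that $2x$ and $2y-1$ are in non-edge configuration in $\pi'$.

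Since $\pi^{-1}=\pi'^{-1}(2a,2b-1)$, passing from $\pi'$ to $\pi$ merely exchanges the positions of the values $2a$ and $2b-1$. Hence $\pi^{-1}(2x)=\pi'^{-1}(2x)$ and $\pi^{-1}(2y-1)=\pi'^{-1}(2y-1)$ unless $a=x$ or $b=y$, and if neither holds we are done immediately. The case $a=x$ \emph{and} $b=y$ is impossible, as it would make $\{x,y\}\in E$ an edge with $\{x,y\}\preceq\{x,y\}$, contradicting the hypothesis. The two remaining cases $a=x$ (which forces $b>y$, since $\{x,b\}\preceq\{x,y\}$ is excluded) and $b=y$ (which forces $a<x$) are symmetric, so I would treat only $a=x$.

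In the case $a=x$, $b>y$, we have $\pi^{-1}(2x)=\pi'^{-1}(2b-1)$ and $\pi^{-1}(2y-1)=\pi'^{-1}(2y-1)$, so the claim reduces to comparing the positions and parities of the two odd values $2b-1$ and $2y-1$ in $\pi'$. The parities are pinned down by $\pi'\in\Dder$: a value lies at an odd position exactly when that position is smaller than the value. To compare the positions I would feed auxiliary instances back into the induction hypothesis. Because $\{x,b\}$ is $\preceq$-minimal, no edge of $G'$ lies inside $[x,b]$, hence none inside $[y,b]$ either, so the hypothesis applies to the pairs $\{x,b\}$ and $\{y,b\}$ and constrains the relative positions of $2x,2b-1$ and of $2y,2b-1$ in $\pi'$. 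The decisive extra ingredient is the X-property: any left neighbour $a'<x$ of $y$ forms, with $\{x,b\}$, the pattern $a'<x<y<b$, so terrain-likeness forces $\{a',b\}\in E$; this nesting of the neighbourhoods of $y$ inside those of $b$ is what prevents $2b-1$ from landing on the ``wrong'' side of $2y-1$.

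I expect this last step to be the main obstacle. The relation between $\pi^{-1}(2x)$ and $\pi^{-1}(2y-1)$ genuinely depends on the graph---small examples show that $2x$ may end up either far to the left or far to the right of $2y-1$---so the terrain-like structure must enter in an essential way, and \Cref{thm:remove-edge} together with the earlier lemmas does not suffice on its own. Concretely, one must rule out the single ``crossing'' situation in which $2x$ lands to the right of $2y-1$ with mismatched parity (which would produce an \emph{edge} configuration), and in the remaining, ``aligned'' situations one must show that the position of $2x$ strictly exceeds that of $2y-1$. Both facts I would establish by using the X-property to keep the trajectory of $2b-1$ nested outside that of $2y-1$ throughout.
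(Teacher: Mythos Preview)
Your framework matches the paper's: induction on~$|E|$, the base case via~$\pi_0$, and the observation that a $\preceq$-minimal edge~$\{a,b\}$ with $a\neq x$ and $b\neq y$ can be removed without affecting~$\pi^{-1}(2x)$ or~$\pi^{-1}(2y-1)$. The reduction to the single case $a=x$, $b>y$ (up to symmetry) is also correct.

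The gap is in that remaining case. After removing $\{x,b\}$ you are left comparing the positions of the two \emph{odd} values $2b-1$ and $2y-1$ in~$\pi'$, and this is not an ``edge/non-edge configuration'' statement at all, so the induction hypothesis does not speak to it directly. Your proposed remedy---apply the hypothesis to $\{x,b\}$ and $\{y,b\}$ in~$G'$---yields constraints on $\pi'^{-1}(2x)$ and on $\pi'^{-1}(2y)$, but what you need is $\pi'^{-1}(2y-1)$, which is an entirely different position; there is no general relation between $\pi'^{-1}(2y)$ and $\pi'^{-1}(2y-1)$. So that line does not close the argument.

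The missing move, which the paper carries out, is to peel off a \emph{second} edge so that the position $\pi^{-1}(2x)=\pi'^{-1}(2b-1)$ is re-expressed as $\Pi(G'')^{-1}(2x')$ for some \emph{even} value~$2x'$ with $x'<x$ (namely, $x'$ the rightmost left neighbour of~$b$ other than~$x$); then one applies the induction hypothesis to the pair $(x',y)$ in~$G''$. Your X-property remark---that every left neighbour of~$y$ is also a left neighbour of~$b$---is exactly the ingredient needed here, but it enters in a more delicate way than you indicate: one must separately treat the cases where $y$ has no left neighbour, where $b$ has a left neighbour strictly between that of~$y$ and~$x$, and the ``tight'' case where the rightmost left neighbours of $y$ and~$b$ coincide (in which three edges must be removed and the hypothesis is applied to a pair of the form $(w,b)$ instead). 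For these applications of the hypothesis to be legitimate one also needs the standing reduction you omitted: first delete \emph{all} $\preceq$-minimal edges not meeting $\{2x,2y-1\}$, so that every remaining $\preceq$-minimal edge has left endpoint~$x$ or right endpoint~$y$; without this, $G''$ may well contain an edge $\preceq\{x',y\}$ and the induction does not apply.
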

\begin{proof}
  We use induction over~$\abs{E}$.
  Note that the claim holds when $E = \emptyset$, as all pairs are in non-edge configuration in $\pi_0 = \Pi(([n], \emptyset))$.
  Furthermore, if $\{u, v\} \in E$ is any $\preceq$-minimal edge with~$u<v$ and $\{2u, 2v-1\} \cap \{2x, 2y-1\} = \emptyset$,
  then, by \cref{thm:remove-edge}, we may delete the edge~$\{u, v\}$ without changing $\pi^{-1}(2x)$ or $\pi^{-1}(2y-1)$.
  Therefore, we assume that $G$~does not contain any such edges.
  
  Let~$e$ now be a~$\preceq$-minimal edge.
  By the above assumption and due to symmetry, we may assume that $e = \{x, z\}$ with $x < y < z$.
  Let $G' := G - \{e\}$ and $\pi' := \Pi(G')$, thus $\pi = (2x, 2z-1)\pi'$ by \cref{thm:remove-edge}.
  
  We first consider the case that~$y$~has no left neighbors.
  Then $\pi^{-1}(2y-1) = \pi_0^{-1}(2y-1) = 2y$.
  If also $z$~has no neighbors left of~$x$, then $\pi^{-1}(2x) = \pi'^{-1}(2z-1) = \pi_0^{-1}(2z-1) = 2z$.
  Thus, $2x$ and~$2y-1$~are in non-edge configuration.
  In the case where $z$~does have some neighbor~$x' < x$ (chosen rightmost),
  we note that $\{x', z\}$ is $\preceq$-minimal in~$G'$ by our assumption above.
  Thus, by induction on $G'' := G' - \{\{x', z\}\} = G - \{\{x, z\}, \{x', z\}\}$, we obtain 
  that $2x'$ and $2y-1$ are in non-edge configuration in~$\Pi(G'')$.
  Since $\pi^{-1}(2x) = \pi'^{-1}(2z-1) = \Pi(G'')^{-1}(2x')$,
  this yields that $2x$ and $2y-1$~are in non-edge configuration in~$\pi$.
  
  It remains to consider the case that~$y$~has a left neighbor~$w < x$ (chosen rightmost).
  Note that, by our assumption, the edge~$\{w, y\}$ is $\preceq$-minimal.
  Due to the X-property, we have $\{w, z\} \in E$.
  Let $x'$ be the rightmost neighbor of~$z$ left of~$x$ and $y'$ the leftmost neighbor of~$w$ right of~$y$.
  Observe that $w \leq x'$ and $y' \leq z$.

  If $w < x'$, then~$\{x',z\}$ is~$\preceq$-minimal in~$G'$ by our assumption above and by the choice of~$w$.
  Thus, by induction on $G''\coloneqq G - \{\{x, z\}, \{x', z\}\}$, we obtain that~$2x'$ and~$2y-1$ are in non-edge configuration in~$\Pi(G'')$.
  Since $\pi^{-1}(2x)=\pi'^{-1}(2z-1)=\Pi(G'')^{-1}(2x')$ and $\pi^{-1}(2y-1)=\Pi(G'')^{-1}(2y-1)$, this yields that~$2x$ and~$2y-1$ are in non-edge configuration in~$\pi$.

  If $y' < z$, then the situation is symmetric to the case~$w < x'$ above and the claim follows by induction on $G - \{\{w, y\}, \{w, y'\}\}$.

  Otherwise, if~$w=x'$ and~$y'=z$, then $\{w,y\}$~is $\preceq$-minimal in~$G'$ and~$\{w,z\}$ is $\preceq$-minimal in~$G''\coloneqq G - \{\{x, z\}, \{w, y\}\}$
  by our assumption and choice of~$w$.
  Thus, by induction on~$G'''\coloneqq G - \{\{x,z\},\{w,y\},\{w,z\}\}$, we obtain that~$2w$ and~$2z-1$ are in non-edge configuration in~$\Pi(G''')$.
  Since $\pi^{-1}(2x) = \pi'^{-1}(2z-1) = \Pi(G''')^{-1}(2w)$ and $\pi^{-1}(2y-1) = \Pi(G'')^{-1}(2w) = \Pi(G''')^{-1}(2z-1)$, it follows that~$2x$ and~$2y-1$ are in non-edge configuration in~$\pi$.
 \end{proof}
\noindent
 For example, one easily verifies that~$(4,5)$ and~$(8,11)$ are in non-edge configuration in \Cref{fig:example}.

 Before finally proving that~$\Gamma$ is the inverse of~$\Pi$, we
 also need the following.
 
\begin{lemma}\label{thm:edge-config}
  Let~$G = ([n], E) \in \tlset$ be a \tlgraph and $\{x, y\} \in E$ a $\preceq$-minimal edge with $x < y$.
  Then $(2x, 2y-1)$~is in edge configuration in~$\pi \coloneqq \Pi(G)$.
\end{lemma}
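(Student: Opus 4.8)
The plan is to mirror the structure of \cref{thm:non-edge-config}, again using induction over $\abs{E}$, but now tracking the \emph{edge} we care about rather than a non-edge. Let $\{x,y\}$ be the given $\preceq$-minimal edge with $x<y$. First I would handle the base-case-like reduction: as in the proof of \cref{thm:non-edge-config}, any $\preceq$-minimal edge $\{u,v\}$ with $\{2u,2v-1\}\cap\{2x,2y-1\}=\emptyset$ can be deleted via \cref{thm:remove-edge} without altering $\pi^{-1}(2x)$ or $\pi^{-1}(2y-1)$, so I may assume $G$ contains no such edge. Since $\{x,y\}$ itself is $\preceq$-minimal, any other remaining $\preceq$-minimal edge must share an endpoint-image with $2x$ or $2y-1$; this should force the graph into a few controllable configurations.

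The heart of the argument is to compute $\pi^{-1}(2x)$ and $\pi^{-1}(2y-1)$ explicitly in each configuration and verify the edge-configuration condition $\pi^{-1}(2x)<\pi^{-1}(2y-1)\iff \pi^{-1}(2x)\equiv\pi^{-1}(2y-1)\pmod 2$. If $\{x,y\}$ is the unique $\preceq$-minimal edge (in the reduced graph), I would delete it to get $G'$ with $\pi=(2x,2y-1)\pi'$ where $\pi'=\Pi(G')$. If moreover neither $x$ nor $y$ participates in any other edge ``inside'' $\{x,y\}$, then in $\pi'$ the numbers $2x$ and $2y-1$ sit at their $\pi_0$-positions $2x-1$ and $2y$, so $\pi^{-1}(2x)=2y$ and $\pi^{-1}(2y-1)=2x-1$; since $2y$ is even and $2x-1$ is odd and $2x-1<2y$, they have opposite parity and $\pi^{-1}(2x)>\pi^{-1}(2y-1)$, giving edge configuration. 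The remaining configurations (where some neighbor chain attaches to $x$ or $y$) are resolved by peeling off a $\preceq$-minimal neighboring edge and invoking the induction hypothesis on the smaller graph, exactly as the three sub-cases of \cref{thm:non-edge-config} do, using the X-property to guarantee the relevant auxiliary edges exist.

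The key technical link is the following: deleting $\{x,y\}$ and then applying \cref{thm:non-edge-config} or the induction hypothesis to a strictly smaller graph. Concretely, one expresses $\pi^{-1}(2x)$ through a chain of equalities $\pi^{-1}(2x)=\pi'^{-1}(2y-1)=\cdots$ that eventually lands on a pair already known (by \cref{lem:valid-swap} or \cref{thm:non-edge-config}) to be in non-edge configuration in the smaller permutation, and then observe that the single transposition $(2x,2y-1)$ flips non-edge into edge configuration -- precisely the observation recorded right after the definition of edge configuration. I would set up the valid order so that $\{x,y\}$ is processed last among the minimal edges in scope, letting all the position-tracking equalities hold cleanly.

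The main obstacle I anticipate is the bookkeeping of positions under the chain of transpositions: showing that $\pi^{-1}(2x)$ and $\pi^{-1}(2y-1)$ really do land on the claimed positions after the neighboring minimal edge is removed. This is where the X-property must be used carefully to ensure the needed companion edges (the analogues of $\{w,z\}$ in \cref{thm:non-edge-config}) are present, so that the induction applies to a graph that is still \tl{} and still has the relevant edge $\preceq$-minimal. Once the position equalities are established, the parity check that converts them into the edge-configuration condition is routine.
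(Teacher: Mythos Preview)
Your plan is far more elaborate than necessary, and in fact you have already written down the complete proof without noticing it. Observe what happens when you delete the $\preceq$-minimal edge $\{x,y\}$: by \cref{thm:remove-edge} the graph $G' \coloneqq G - \{\{x,y\}\}$ is terrain-like and $\Pi(G') = (2x,2y-1)\pi$. In $G'$ there is \emph{no} edge $\{u,v\}$ with $\{u,v\}\preceq\{x,y\}$, because the only such edge in $G$ was $\{x,y\}$ itself. Hence \cref{thm:non-edge-config} applies directly to $G'$ with the pair $(x,y)$ and yields that $(2x,2y-1)$ is in non-edge configuration in $\Pi(G') = (2x,2y-1)\pi$. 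The observation right after the definition of edge configuration then gives that $(2x,2y-1)$ is in edge configuration in $\pi$. That is the paper's entire proof.

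By contrast, your proposal re-runs the full induction and case analysis of \cref{thm:non-edge-config} from scratch. This is not wrong in spirit, but it is redundant: all of that work has already been packaged into \cref{thm:non-edge-config}, and the present lemma is a three-line corollary of it. The ``bookkeeping of positions'' you flag as the main obstacle simply does not arise, and the X-property need not be invoked again. In your third paragraph you actually state the decisive link (delete $\{x,y\}$, apply \cref{thm:non-edge-config}, flip via the transposition), but you treat it as one ingredient among many rather than as the whole argument.
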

\begin{proof}
  Let $G' := G - \{\{x, y\}\}$.
  By \cref{thm:remove-edge}, $(2x, 2y-1)\pi = \Pi(G')$.
  By applying \cref{thm:non-edge-config} to~$G'$,
  we see that $(2x, 2y-1)$ is in non-edge configuration in~$(2x, 2y-1)\pi$
  and thus in edge configuration in~$\pi$.
\end{proof}

\begin{lemma}
  For every~$G\in\tlset$, it holds~$\Gamma(\Pi(G))=G$.
\end{lemma}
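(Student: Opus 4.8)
The plan is to track the execution of~$\Gamma$ on the permutation $\pi \coloneqq \Pi(G)$ step by step and show that $\Gamma$ detects precisely the edges of~$G$. Since $\Gamma$ is well-defined (\cref{thm:inverse-welldef}), I fix one valid total order $\leq$ on all of~$\binom{[n]}{2}$ and use its restriction to~$E$ whenever I evaluate~$\Pi$. Writing the pairs as $p_1 < p_2 < \dots < p_N$ in this order (so that $\Gamma$ processes them in the sequence $p_1, \dots, p_N$), I define for $0 \le k \le N$ the graph $G^{(k)}$ obtained from~$G$ by deleting every edge among $\{p_1, \dots, p_k\}$.

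The key invariant I would establish by induction on~$k$ is the following: after $\Gamma$ has processed $p_1, \dots, p_k$, the graph $G^{(k)}$ is \tl{}, the edges inserted so far are exactly $\{p_i : i \le k,\ p_i \in E\}$, and the current permutation equals $\Pi(G^{(k)})$. The base case $k = 0$ is immediate, since $G^{(0)} = G \in \tlset$, no edges have been inserted, and the current permutation is $\pi = \Pi(G) = \Pi(G^{(0)})$.

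For the inductive step I process $p_k = \{u, v\}$ with $u < v$. The crucial bookkeeping observation is that any edge $e' \prec p_k$ satisfies $e' < p_k$ by validity of~$\leq$, hence $e' \in \{p_1, \dots, p_{k-1}\}$ and has already been deleted; thus $G^{(k-1)}$ contains no edge strictly below~$p_k$ with respect to~$\preceq$. If $p_k \in E$, then $p_k$ is a $\preceq$-minimal edge of the \tl{} graph~$G^{(k-1)}$, so \cref{thm:edge-config} shows that $(2u, 2v-1)$ is in edge configuration in $\Pi(G^{(k-1)})$; hence $\Gamma$ correctly inserts~$\{u, v\}$, and \cref{thm:remove-edge} (applicable precisely because $p_k$ is $\preceq$-minimal, which also shows $G^{(k)}$ is \tl{}) gives $\Pi(G^{(k-1)}) = (2u, 2v-1)\,\Pi(G^{(k)})$, so that the updated permutation $(2u, 2v-1)\,\Pi(G^{(k-1)})$ equals $\Pi(G^{(k)})$. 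If instead $p_k \notin E$, then $G^{(k-1)} = G^{(k)}$ contains no edge $\preceq p_k$ at all, so \cref{thm:non-edge-config} shows that $(2u, 2v-1)$ is in non-edge configuration; hence $\Gamma$ inserts nothing and leaves the permutation unchanged. In both cases the invariant is preserved.

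Finally, at $k = N$ every pair has been processed, $G^{(N)}$ is edgeless, and the set of inserted edges is exactly~$E$, giving $\Gamma(\Pi(G)) = G$. I expect the only real subtlety to be the bookkeeping argument guaranteeing that each edge $p_k \in E$ is $\preceq$-minimal in the reduced graph~$G^{(k-1)}$ exactly when $\Gamma$ reaches it; this is what lets me alternate cleanly between \cref{thm:edge-config} and \cref{thm:non-edge-config}, while the heavy structural work is already carried by those two lemmas together with the edge-removal identity of \cref{thm:remove-edge}.
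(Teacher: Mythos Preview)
Your proof is correct and takes essentially the same approach as the paper: both arguments repeatedly strip off a $\preceq$-minimal edge via \cref{thm:remove-edge} and use \cref{thm:non-edge-config} and \cref{thm:edge-config} to certify that $\Gamma$ makes the right decision at each pair. The only cosmetic difference is that the paper phrases this as induction on~$|E|$ (and therefore invokes \cref{lem:valid-swap} to align two separate runs of~$\Gamma$), whereas your step-by-step tracking of $\Gamma$'s internal state over all pairs makes that appeal unnecessary.
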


\begin{proof}
	Let $G = (\oneto{n}, E) \in \tlset{}$ be arbitrary and $\pi := \Pi(G)$.
	We use induction on~$\abs{E}$.
	If~$E = \emptyset$, then the claim holds because all pairs~$(2x, 2y-1)$ with~$1 \leq x < y \leq n$ are in non-edge configuration in~$\pi_0=\pi$.
	
	Now, let $e = \{x, y\} \in E$ with $x < y$ be the minimal edge with respect to some valid ordering~$\leq$ of~$\binom{[n]}{2}$.
	Set $G' := G - \{e\}$ and observe that
	$\Pi(G') = (2x, 2y-1)\pi$ by \cref{thm:remove-edge}.
	Since all pairs $(2u,2v-1)$ with~$\{u, v\} < \{x, y\}$ are in non-edge configuration in~$\pi$ by \cref{thm:non-edge-config},
	and $(2x, 2y-1)$ is in edge configuration in~$\pi$ by \cref{thm:edge-config},
	it holds that
        \[\Gamma(\pi) = \Gamma((2x, 2y-1)\pi) + \{e\} = \Gamma(\Pi(G'))+\{e\}=G' + \{e\} = G,\]
        where the first equality follows from \cref{lem:valid-swap} and the third equality holds by induction.
\end{proof}

\subsection{$\Pi \circ \Gamma = \id_{\Dder}$}

We start with showing that only~$\pi_0$ corresponds to the edgeless graph.

\begin{lemma}\label{thm:pi0}
	The permutation $\pi_0 = (1, 2)(3, 4)\dots (2n-1)(2n)$ is the unique element of~$\Dder$ in which all pairs $i < j$ with $i$~even and $j$~odd are in non-edge configuration.
\end{lemma}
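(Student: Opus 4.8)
The plan is to prove both halves of the statement: first that $\pi_0$ does have the property, and then that it is the only such permutation, the latter by induction on~$n$ in which I peel off the outermost block $\{1,2\}$ of positions and values. For the easy half, I would use that $\pi_0$ is an involution, so $\pi_0^{-1}(2k)=2k-1$ is odd and $\pi_0^{-1}(2\ell-1)=2\ell$ is even; given even $i=2k$ and odd $j=2\ell-1$ with $i<j$ one has $k<\ell$, hence $\pi_0^{-1}(i)=2k-1<2\ell=\pi_0^{-1}(j)$ while these two positions have opposite parity, so the pair is in non-edge configuration. Thus $\pi_0$ qualifies.

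For uniqueness, let $\pi\in\Dder$ have all the relevant pairs in non-edge configuration and abbreviate $q:=\pi^{-1}$. Since $\pi\in\Dset$ forces $\pi(2)<2$, I always have $\pi(2)=1$, i.e.\ $q(1)=2$; the crucial point is to upgrade this to $\pi(1)=2$, i.e.\ $q(2)=1$. This is the step I expect to be the main obstacle, and I would handle it by a counting (pigeonhole) argument. By the Dumont property, value~$2$ lies either at position~$1$ or at an even position $\ge 4$. Suppose for contradiction that $q(2)=2s$ with $s\ge 2$. Applying the non-edge condition to the pairs $(2,j)$ for every odd value $j\ge 3$ gives $2s<q(j)\iff q(j)\text{ is odd}$; equivalently, every odd value lying to the right of position~$2s$ must occupy an odd position, and every odd value to the left of~$2s$ must occupy an even position. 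But there are only $n-s$ odd positions to the right of~$2s$, and—after discounting position~$2$, which already holds value~$1$—only $s-2$ even positions to its left, giving $n-2$ admissible slots in total, whereas the odd values $3,5,\dots,2n-1$ number $n-1$. This contradiction forces $q(2)=1$.

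Once positions~$1$ and~$2$ carry values~$2$ and~$1$ (a genuine $\pi_0$-block), I would finish by reduction and the induction hypothesis. Restrict $\pi$ to the positions and values $\{3,\dots,2n\}$ and relabel by subtracting~$2$ to obtain a permutation $\pi'$ on $[2(n-1)]$. Shifting all positions and values by~$2$ preserves the Dumont inequalities and the derangement property, so $\pi'\in\tilde{\mathcal{D}}^2_{2(n-1)}$; moreover, since such a shift preserves both the order and the parity of every position, it preserves the edge/non-edge configuration of each surviving pair, so $\pi'$ inherits the all-non-edge property. The induction hypothesis then yields $\pi'=\pi_0$, hence $\pi=\pi_0$, with the base case $n=1$ immediate because $\Dder$ is then a singleton. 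The only genuinely nontrivial ingredient is the pigeonhole step pinning value~$2$ to position~$1$; the verification for $\pi_0$ and the shift-invariance used in the reduction are routine bookkeeping.
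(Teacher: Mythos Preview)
Your proof is correct and follows essentially the same strategy as the paper's: a pigeonhole count pins down one extremal block, after which one restricts and inducts. The only difference is that the paper peels off the \emph{top} block by showing $\pi^{-1}(2n-1)=2n$ (using the pairs $(i,2n-1)$ for even $i$ and counting admissible positions for the even values), whereas you peel off the \emph{bottom} block by showing $\pi^{-1}(2)=1$ (using the pairs $(2,j)$ for odd $j$ and counting admissible positions for the odd values). These are mirror images of one another under the natural symmetry $k\mapsto 2n+1-k$, so the two arguments are effectively the same.
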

\begin{proof}
	Since $\pi_0^{-1}(i) = i-1$ and $\pi_0^{-1}(j) = j+1$,
	we have $\pi_0^{-1}(i) < \pi_0^{-1}(j)$ and $\pi_0^{-1}(i) \not\equiv \pi_0^{-1}(j) \pmod{2}$, that is, $i$ and $j$ are in non-edge configuration.

        It remains to show that no other permutation in~$\Dder$ has this property.
	To this end, let~$\pi \in \Dder$ be any permutation with that property.
	Clearly, $\pi(2n-1) = 2n$ holds for every permutation in~$\Dder$.
	We claim that $\pi(2n) = 2n-1$.
	
	Suppose for contradiction that $\pi^{-1}(2n-1) < 2n-1$.
	Then $p\coloneqq\pi^{-1}(2n-1)$~must be odd as~$\pi \in \Dder$.
	Consider the set $I := \{i \mid 2 \leq i < 2n-1, i~\text{even}\}$.
	Since $i$ and $2n-1$ are in non-edge configuration in~$\pi$ for all~$i \in I$,
	each $j \in \pi^{-1}(I) := \{\pi^{-1}(i) \mid i \in I\}$ must either be odd or satisfy $j < p$ but not both.
	Thus, we have
	\[
		\pi^{-1}(I) \subseteq \{j \mid 2 \leq j < p,\, j~\text{even}\} \cup \{j \mid p < j \leq 2n-3,\, j~\text{odd}\}.
	\]
	But the left-hand side has cardinality~$\abs{I} = n-1$,
	while the right-hand side has cardinality
	\[
		\frac{p -1}{2} + \frac{2n-3 - p}{2}
		= n -2,
	\]
        which yields a contradiction.
	
	This proves that~$\pi(2n) = 2n-1$.
	Hence, the restriction of~$\pi$ to $\oneto{2n-2}$ is a permutation in~$\tilde{\mathcal{D}}^2_{2n-2}$
	to which we can repeatedly apply the same argument again, ultimately concluding that $\pi = \pi_0$.
\end{proof}

\begin{lemma}
  For every~$\pi\in\Dder$, it holds~$\Pi(\Gamma(\pi))=\pi$.
\end{lemma}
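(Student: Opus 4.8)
The plan is to exploit the structural duality between $\Pi$ and $\Gamma$: where $\Gamma$ detects edge configurations and \emph{removes} them by transpositions, $\Pi$ \emph{replays} exactly those transpositions in the opposite order. Concretely, I fix one valid order $\le$ on $\binom{[n]}{2}$ and run $\Gamma$ on $\pi$ with this order. Let $f_1 < f_2 < \dots < f_k$ be the pairs that $\Gamma$ actually inserts as edges, listed in the ascending order in which they are processed, and write $\tau_i \coloneqq \tau(f_i)$. By \cref{def:inverse}, the permutation maintained by $\Gamma$ evolves as $\pi,\ \tau_1\pi,\ \tau_2\tau_1\pi,\ \dots$, so after the last insertion it equals $\tau_k\cdots\tau_1\pi$, while the output graph is $G \coloneqq \Gamma(\pi)$ with edge set $E = \{f_1,\dots,f_k\}$.

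The crux is to identify this final permutation. I would argue that once $\Gamma$ has processed \emph{every} pair, all pairs $(2x,2y-1)$ with $x<y$ are in non-edge configuration in the resulting permutation: this is exactly \cref{lem:valid-swap} applied at the $\le$-maximal pair, since every pair lies $\le$-below it. Then \cref{thm:pi0} forces that permutation to be $\pi_0$, giving
\[ \tau_k\cdots\tau_1\,\pi = \pi_0, \qquad\text{equivalently}\qquad \pi = \tau_1\tau_2\cdots\tau_k\,\pi_0, \]
where the second equation uses that each $\tau_i$ is an involution.

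It then remains to recognize the right-hand side as $\Pi(G)$. Since $\le$ restricts to a valid order on $E$, I may feed this very order to $\Pi$. Writing $E$ descendingly as $e_1 > \dots > e_m$ (so $m=k$ and $e_i = f_{k+1-i}$), the definition of $\Pi$ yields $\Pi(G) = \tau(e_m)\tau(e_{m-1})\cdots\tau(e_1)\,\pi_0 = \tau_1\tau_2\cdots\tau_k\,\pi_0 = \pi$, which is the claim.

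The main obstacle I anticipate is bookkeeping rather than conceptual: I must check carefully that the ascending order in which $\Gamma$ emits edges is precisely the reverse of the descending order in which $\Pi$ consumes them, so that the two transposition products coincide \emph{as written words}, not merely up to commutation. The well-definedness results (\cref{thm:welldef} and \cref{thm:inverse-welldef}) guarantee I am free to use one and the same valid order throughout, which removes any ambiguity in this matching. The only other delicate point is the clean application of \cref{lem:valid-swap} after the \emph{complete} run to obtain the ``all non-edge'' property, followed by the uniqueness in \cref{thm:pi0}; everything else reduces to the involution identity for transpositions.
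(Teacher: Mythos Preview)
Your argument is correct and rests on exactly the same two ingredients as the paper's proof, namely \cref{lem:valid-swap} (to guarantee that after the full run the current permutation lies in~$\Dder$ with every pair in non-edge configuration) and \cref{thm:pi0} (to identify that permutation as~$\pi_0$). The paper packages this as an induction on~$|E|$, peeling off the $\le$-minimal edge $e=\{x,y\}$ and reducing to $\Pi(\Gamma(\pi'))=\pi'$ for $\pi'=(2x,2y-1)\pi$, whereas you run $\Gamma$ to completion and invert the entire word of transpositions in one step; these are equivalent presentations of the same argument.
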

\begin{proof}
	Define $G := (\oneto{n}, E) := \Gamma(\pi)$.
	We use induction on~$\abs{E}$.
	If~$E = \emptyset$, then all pairs $(2x, 2y-1)$ with $1 \leq x < y \leq n$ must be in non-edge configuration in~$\pi$.
	Thus, $\pi = \pi_0 = \Pi(G)$ by \cref{thm:pi0}.
	
	Let now $e = \{x, y\} \in E$ be the minimal edge with respect to some valid ordering~$\leq$ of~$\binom{\oneto{n}}{2}$ and let~$x < y$.
	By choice of~$e$ and by definition of~$\Gamma$, the pair $(2x, 2y-1)$ is in edge configuration in~$\pi$ and any pair~$(2u, 2v-1)$ with $\{u, v\} < \{x, y\}$ is in non-edge configuration.
	Define $G' := G - \{e\}$ and $\pi' := (2x, 2y-1)\pi$.
	Thus, $G' = \Gamma(\pi')$ by definition of~$\Gamma$ and by~\cref{lem:valid-swap}.
	By induction, we have $\Pi(G') = \pi'$, which yields $\Pi(G) = (2x, 2y-1)\Pi(G') = (2x, 2y-1)\pi' = \pi$.
\end{proof}

\section{Conclusion}

We close with some natural follow-up questions that are left open.
\begin{compactitem}
\item Which Dumont permutations correspond to the subset of persistent graphs (or even terrain-visibility graphs)? Is there a simple characterization?
  
\item Which graphs do we obtain if we reverse the order of vertex pairs in the definition of~$\Gamma$ (\Cref{def:inverse}) from outer to inner?
Is there a relation to \emph{non-jumping} graphs~\cite{AFS19,AFKS22}?

\item Is there an easy bijection between terrain-like graphs and \emph{alternation acyclic tournaments} (which are also counted by the median Genocchi numbers~\cite{Hetyei19})?
  
\end{compactitem}

\printbibliography

\end{document}